\let\OLDthebibliography\thebibliography
\renewcommand\thebibliography[1]{
  \OLDthebibliography{#1}
  \setlength{\parskip}{0pt}
  \setlength{\itemsep}{0pt plus 0.3ex}
}
\newtheorem{thm}{Theorem}[section]
\newtheorem{lemma}[thm]{Lemma}
\newtheorem{prop}[thm]{Proposition}
\newtheorem{conj}[thm]{Conjecture}
\theoremstyle{definition}
\newtheorem{defn}[thm]{Definition}
\newtheorem{example}[thm]{Example}
\theoremstyle{remark}
\numberwithin{equation}{section}
\newcommand*\wrapletters[1]{\wr@pletters#1\@nil}
\def\wr@pletters#1#2\@nil{#1\allowbreak\if&#2&\else\wr@pletters#2\@nil\fi}
\def \bbF {\mathbb F}
\def \bQ {\mathbb Q}
\def \bZ {\mathbb Z}
\def \fP {\mathfrak P}
\def \det {\mathrm{det}}
\def \Gal {{\mathrm{Gal}}}
\def \sgn {{\mathrm{sgn}}}
\begin{document}
\title[On the density of a set of primes associated to an elliptic curve]{On the density of a set of primes associated to an elliptic curve}
\author[Nuno Arala]{Nuno Arala}
\email{Nuno.Arala-Santos@warwick.ac.uk}
\thanks{}
\date{}
\begin{abstract} For a given elliptic curve $E$ defined over the rationals, we study the density of primes $p$ satisfying $\mathrm{gcd}(\#E(\bbF_p),p-1)=1$ and give a conjectural value for this density with strong heuristic evidence for most elliptic curves, in an appropriate sense.

\noindent\textbf{Keywords:} \emph{Elliptic Curves, Rational Points, Galois Representations}
\end{abstract}

\maketitle

\section{Introduction}
\noindent As motivation for the considerations that follow, we quote the following result, which gives a sufficient condition for an elliptic curve $E/\mathbb{Q}$ not to have integral points over any cyclotomic ring $\mathbb{Z}[\zeta_{p^n}]$, where $p$ is a fixed prime and $\zeta_{p^n}$ denotes a primitive $p^n$-th root of unity.

\begin{thm}
Let $E/\mathbb{Q}$ be an elliptic curve with $E(\mathbb{Q})=\{0\}$ and with conductor $\mathcal{N}_E$. Define
$$R_E=\{p\nmid\mathcal{N}_E\text{ prime}:\mathrm{gcd}(p(p-1),\#E(\mathbb{F}_p))=1\}\text{.}$$
Then $(E-0)(\mathbb{Z}[\zeta_{p^n}])=\emptyset$ for $p\in R_E$ and $n\geq1$.
\end{thm}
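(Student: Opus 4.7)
The plan is to assume, for contradiction, that there exists a nonzero $P \in E(\bZ[\zet_{p^n}])$ and to derive a contradiction using the trace over the cyclotomic Galois group. Set $G = \Gal(\bQ(\zet_{p^n})/\bQ) \cong (\bZ/p^n\bZ)^*$, of order $\phi(p^n) = p^{n-1}(p-1)$. First I would form the sum $T = \sum_{\sig \in G} \sig(P)$ in $E(\bQ(\zet_{p^n}))$. Being $G$-invariant, $T$ lies in $E(\bQ(\zet_{p^n}))^G = E(\bQ)$, which is trivial by hypothesis, so $T = 0$.

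Next I would reduce modulo $p$. Since $p$ is totally ramified in $\bZ[\zet_{p^n}]$, there is a unique prime $\fp \mid p$, with residue field $\bbF_p$; and since $p \nmid \cN_E$, $E$ has good reduction at $\fp$, so the reduction map from the N\'eron model $\cE(\bZ[\zet_{p^n}]_\fp)$ to $E(\bbF_p)$ is a group homomorphism. Two observations are then crucial: (i) since $P = (x,y)$ has coordinates in $\bZ[\zet_{p^n}]$, its reduction $\bar P = (\bar x, \bar y)$ is an \emph{affine} point of $E(\bbF_p)$ and therefore not the group identity; and (ii) every $\sig \in G$ stabilises $\fp$ and acts trivially on the residue field $\bbF_p$, so $\overline{\sig(P)} = \bar P$ for each $\sig \in G$. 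Applying reduction to the identity $T = 0$ then gives $p^{n-1}(p-1)\,\bar P = |G|\cdot \bar P = 0$ in $E(\bbF_p)$.

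Consequently, the order of $\bar P$ in $E(\bbF_p)$ divides $\gcd(\#E(\bbF_p), p^{n-1}(p-1))$. The hypothesis $p \in R_E$ yields $\gcd(p, \#E(\bbF_p)) = \gcd(p-1, \#E(\bbF_p)) = 1$, whence $\gcd(p^{n-1}(p-1), \#E(\bbF_p)) = 1$; so $\bar P$ must be the identity of $E(\bbF_p)$, contradicting (i). The only items needing care are the reduction-theoretic inputs --- namely that good reduction at $\fp$ together with $\fp$-integrality makes $\bar P$ a well-defined affine point, that total ramification of $p$ makes $G$ act trivially on $\bbF_p$, and that reduction commutes with the group law on the N\'eron model --- but these are all standard consequences of the good-reduction hypothesis, so I do not anticipate any serious obstacle beyond assembling them cleanly.
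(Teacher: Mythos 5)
Your proof is correct, and it is complete where the paper is not: the paper simply cites Corollary~4 of Siksek's \emph{Integral points on punctured Abelian Varieties}, whereas you have reconstructed a self-contained argument for this special case. The trace argument is exactly the right tool, and the three ingredients you combine are each sound: (a) the Galois trace $T=\sum_{\sigma\in G}\sigma(P)$ lands in $E(\mathbb{Q}(\zeta_{p^n}))^G=E(\mathbb{Q})=0$; (b) because $p$ is totally ramified in $\mathbb{Z}[\zeta_{p^n}]$, the unique prime $\mathfrak{p}\mid p$ is $G$-stable with trivial residue extension, so every $\sigma(P)$ reduces to the \emph{same} affine point $\bar P\in E(\mathbb{F}_p)$, and applying the reduction homomorphism to $T=0$ yields $p^{n-1}(p-1)\bar P=0$; and (c) the hypothesis $\gcd(p(p-1),\#E(\mathbb{F}_p))=1$ gives $\gcd(p^{n-1}(p-1),\#E(\mathbb{F}_p))=1$, forcing $\bar P=0$ and contradicting that $\bar P$ is affine. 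You correctly invoke good reduction at $p$ (from $p\nmid\mathcal{N}_E$) to ensure the reduction map $E(\mathbb{Q}(\zeta_{p^n})_\mathfrak{p})\to E(\mathbb{F}_p)$ is a well-defined group homomorphism, which is needed since the intermediate partial sums of the trace need not have integral coordinates. One small point of alignment: the paper defines $(E-0)(\mathbb{Z}[\zeta_{p^n}])$ as the set of points not reducing to $0$ at \emph{any} prime of $\mathbb{Z}[\zeta_{p^n}]$, and only remarks afterwards that, for a minimal Weierstrass model, this coincides with having integral affine coordinates; your proof silently adopts the latter description, which is fine but worth stating explicitly. What your approach buys is transparency: it shows the result hinges only on totally ramified reduction, the triviality of $E(\mathbb{Q})$, and a Lagrange/gcd argument, with no hidden machinery; what the citation buys the paper is the greater generality of Siksek's result (arbitrary abelian varieties and more general ramification hypotheses).
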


\begin{proof}
This is a particular case of Corollary 4 in \cite{samir}.
\end{proof}
Here $(E-0)(\mathbb{Z}[\zeta_{p^n}])$ denotes the set of points in $E(\mathbb{Q}(\zeta_{p^n}))$ which do \emph{not} reduce to the identity point $0$ in $E(\mathbb{Z}[\zeta_{p^n}]/\mathfrak{P})$ for any prime ideal $\mathfrak{P}$ of $\mathbb{Z}[\zeta_{p^n}]$. If $y^2+a_1xy+a_3y=x^3+a_2x^2+a_4x+a_6$ is a minimal Weierstrass equation for $E/\mathbb{Q}$, then the conclusion of the previous theorem can be phrased by stating that the equation $y^2+a_1xy+a_3y=x^3+a_2x^2+a_4x+a_6$ has no solutions in $(x,y)\in(\mathbb{Z}[\zeta_{p^n}])^2$.

\bigskip

The previous theorem makes it natural to attempt to estimate the density of primes contained in $R_E$, for a fixed $E$. Obviously the condition $p\nmid\mathcal{N}_E$ is unimportant for density considerations since it only affects a finite set of primes. Moreover the set of primes such $p$ such that $\mathrm{gcd}(p,\#E(\mathbb{F}_p))>1$ (i.e. such that $p\mid\#E(\mathbb{F}_p)$) has density $0$. This can be seen as follows. It is well-known that
$$\#E(\mathbb{F}_p)=p+1-a_p$$
where $a_p$ is the trace of the Frobenius map acting on the reduction of $E$ over $\mathbb{F}_p$. The Hasse bound (\cite{silverman}, Theorem V.1.1) states that $|a_p|<2\sqrt{p}$. On the other hand, if $p$ divides $\#E(\mathbb{F}_p)$, then we must have $a_p\equiv 1\pmod{p}$, and for $p\geq 7$ this together with $|a_p|<2\sqrt{p}$ forces $a_p=1$. Now the Sato-Tate Conjecture (\cite{lgg}, \cite{lggt}, \cite{hs-rt}) easily implies that the set of primes $p$ for which $a_p=1$ has density $0$. We reduce therefore our question to the following problem.

\bigskip

\noindent\textbf{Problem.} Given an elliptic curve $E/\mathbb{Q}$, evaluate the density of primes $p$ satisfying
$$\mathrm{gcd}(p-1,\#E(\mathbb{F}_p))=1\text{.}$$

\bigskip

In what follows, we give strong evidence (namely, Theorem \ref{maincount}, which is the main result in the paper) towards the following conjectural (partial) answer to our problem (for the definition of a Serre curve, see Proposition \ref{serreprop} and the discussion after that).

\begin{conj}
Define
$$C=\prod_{\ell\text{ prime}}\left(1-\frac{\ell}{(\ell-1)^2(\ell+1)}\right)\text{.}$$
Let $E/\bQ$ be a Serre curve, and let $D$ be the discriminant of the number field $\bQ(\sqrt{\Delta})$, where $\Delta$ is the discriminant of $E$. Then the density of primes $p$ for which $\mathrm{gcd}(\#E(\bbF_p),p-1)=1$ is
$$\begin{cases}
C&\text{ if }D\equiv0\pmod{4}\\
\left(1+\prod_{\ell\mid D}\frac{-\ell}{\ell^3-\ell^2-2\ell+1}\right)C&\text{ if }D\equiv1\pmod{4}\text{.}
\end{cases}$$
\end{conj}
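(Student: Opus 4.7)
The plan is to recast the gcd condition in Galois-theoretic terms, apply Möbius inversion over primes, and compute each resulting density via Chebotarev, using the explicit image of the adelic Galois representation of a Serre curve. For a prime $\ell$, the condition $\ell\mid\#E(\bbF_p)$ is equivalent to $\rho_{E,\ell}(\mathrm{Frob}_p)$ having $1$ as an eigenvalue on $E[\ell]$, while $\ell\mid p-1$ is equivalent to $\det\rho_{E,\ell}(\mathrm{Frob}_p)\equiv 1\pmod\ell$. Together they force $\rho_{E,\ell}(\mathrm{Frob}_p)$ to be unipotent in $GL_2(\bbF_\ell)$. A direct count (centraliser of a regular unipotent) gives that the fraction of unipotents in $GL_2(\bbF_\ell)$ is $\delta_\ell:=\ell/((\ell-1)^2(\ell+1))$, matching the local factor of $C$. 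Writing $\mathbf{1}\{\gcd=1\}=\prod_\ell(1-\mathbf{1}\{\ell\mid\gcd\})$ and expanding via Möbius yields, formally,
$$\delta(\cP)=\sum_{n\text{ squarefree}}\mu(n)\,\delta_n,$$
where $\delta_n$ is the density of primes $p$ at which $\rho_{E,\ell}(\mathrm{Frob}_p)$ is unipotent for every $\ell\mid n$.

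The next step is to compute $\delta_n$ by Chebotarev: $\delta_n=|X_n|/|G_n|$ with $G_n\subseteq GL_2(\bZ/n\bZ)$ the image of $\rho_{E,n}$ and $X_n\subseteq G_n$ the subset unipotent at each $\ell\mid n$. For a Serre curve (Proposition~\ref{serreprop}), $\mathrm{Im}(\rho_E)=\ker\epsilon_E\subset GL_2(\hat\bZ)$, where
$$\epsilon_E(g)=\mathrm{sgn}(g\bmod 2)\cdot\chi_D(\det g),$$
with $\mathrm{sgn}\colon GL_2(\bbF_2)\cong S_3\to\{\pm1\}$ the sign character and $\chi_D$ the primitive Kronecker character of conductor $|D|$ cutting out $\bQ(\sqrt{\Delta})$. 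Hence $G_n$ either equals $GL_2(\bZ/n\bZ)$ or is its unique index-$2$ subgroup, and the latter occurs precisely when $\epsilon_E$ descends modulo $n$, which requires both $2\mid n$ and $|D|\mid n$.

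This descent dichotomy produces the two cases of the conjecture. If $D\equiv0\pmod4$ then $4\mid|D|$, so no squarefree $n$ satisfies $|D|\mid n$; consequently $G_n=GL_2(\bZ/n\bZ)$ for every squarefree $n$, $\delta_n$ factorises as $\prod_{\ell\mid n}\delta_\ell$, and the Möbius sum collapses to $\prod_\ell(1-\delta_\ell)=C$. If $D\equiv1\pmod4$ then $|D|$ is squarefree and $\epsilon_E$ descends exactly when $2|D|\mid n$. For such an $n$, any $g\in X_n$ has $\det g\equiv1\pmod\ell$ at each $\ell\mid D$ (unipotent implies determinant $1$), so by CRT $\chi_D(\det g)=1$ and $\epsilon_E(g)$ reduces to $\mathrm{sgn}(g\bmod 2)$; among the four unipotents of $GL_2(\bbF_2)$ only the identity has sign $+1$, which combined with $|G_n|=\tfrac12|GL_2(\bZ/n\bZ)|$ yields $\delta_n=\tfrac12\prod_{\ell\mid n}\delta_\ell$ in this range. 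Splitting the Möbius sum between $2|D|\nmid n$ and $2|D|\mid n$ and factoring out the Euler product over $\ell\nmid 2|D|$ produces
$$\delta(\cP)=C+C\prod_{\ell\mid D}\frac{-\delta_\ell}{1-\delta_\ell},$$
which, on simplifying $(\ell-1)^2(\ell+1)-\ell=\ell^3-\ell^2-2\ell+1$, becomes the formula in the conjecture.

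The real difficulty is not this combinatorial bookkeeping but the rigorous passage from the formal Möbius identity to an actual natural density. The sum has infinitely many terms, the Chebotarev error for $\rho_{E,n}$ grows with $n$, and the set of primes $p$ for which $\rho_{E,\ell}(\mathrm{Frob}_p)$ is unipotent at some large $\ell$ requires a uniform estimate in $\ell$ that a naive union bound via Chebotarev is of the wrong order to supply. Obtaining this tail bound unconditionally is precisely the obstruction that keeps the statement conjectural, and it is for this reason that the paper instead establishes the quantitative main result Theorem~\ref{maincount} as rigorous evidence.
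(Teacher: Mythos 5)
Your derivation is correct as a heuristic and, conceptually, it is the same argument the paper uses to justify the conjecture: identify the local condition at $\ell$ with unipotence of $\rho_{E,\ell}(\mathrm{Frob}_p)$ (Propositions~\ref{fprop} and~\ref{fprop2}), count unipotents in $\mathrm{GL}_2(\bbF_\ell)$ to obtain $\delta_\ell=\ell/((\ell-1)^2(\ell+1))$, and use the Serre-curve description of $\rho_E(G_\bQ)$ as $\ker\epsilon_E$ of index $2$ to locate the single entanglement correction. Your analysis of when $\epsilon_E$ descends modulo a squarefree $n$ (needing $2\mid n$ and $|D|\mid n$, hence never when $4\mid D$ and exactly when $2|D|\mid n$ otherwise), the observation that unipotence at $\ell\mid D$ forces $\chi_D(\det)=1$ so that only $\sgn\bmod 2$ cuts down the count, and the resulting $\delta_n=\tfrac12\prod_{\ell\mid n}\delta_\ell$ when $2|D|\mid n$, are all correct and reproduce the correction factor $1+\prod_{\ell\mid D}\frac{-\ell}{\ell^3-\ell^2-2\ell+1}$ after the Möbius sum is simplified.

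The one genuine difference is bookkeeping rather than substance. The paper does not perform a Möbius inversion over squarefree $n$; instead, for a finite set $S$ it makes a single Chebotarev count inside $\prod_{\ell\in S}\mathrm{GL}_2(\bbF_\ell)$ (or its index-$2$ Serre subgroup $\prod^\ast$) of tuples avoiding the unipotent classes, which gives the exact density $\delta_S$ or its corrected version directly (Theorem~\ref{maincount}). Your inclusion--exclusion over $n$ and the paper's direct count over $S$ produce the same numbers for any finite truncation, so neither buys a rigorous advantage; they are two ways of organising the same Chebotarev computation. Two minor imprecisions in your writeup: $\mathrm{GL}_2(\bZ/n\bZ)$ does not in general have a \emph{unique} index-$2$ subgroup (it has several, since its abelianisation is $(\bZ/n\bZ)^\times\times\{\pm1\}$), so the relevant one should be named as $\ker\epsilon_E$; and the ``fraction of unipotents'' $\ell/((\ell-1)^2(\ell+1))$ is not the probability of being conjugate to a \emph{regular} unipotent but the probability of being unipotent at all, i.e.\ $\ell^2/|\mathrm{GL}_2(\bbF_\ell)|$ counting the identity together with the regular unipotent class.

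You correctly identify the real obstruction: turning the formal identity $\sum_n\mu(n)\delta_n$ into a theorem about natural density requires a uniform control on the contribution from large $\ell$, which neither Chebotarev with effective error terms nor a naive union bound supplies unconditionally. This is exactly why the paper leaves the statement as Conjecture~\ref{conj} and proves only the finite-$S$ Theorem~\ref{maincount} as evidence; your proposal reaches the same stopping point for the same reason.
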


\section{Notation}
\label{notation}

Our approach will make use of Galois representations associated to the torsion of $E$. We fix the corresponding notation here. For a (not necessarily positive) integer $N$, we use the standard notation $E[N]$ for the subgroup of $N$-torsion points of $E$; then $E[N]\cong(\bZ/N\bZ)^2$ (\cite{silverman}, Corollary III.6.4). The absolute Galois group $G_\bQ=\Gal(\overline{\bQ}/\bQ)$ acts on $E[N]$ via $\bZ/N\bZ$-automorphisms, yielding a group homomorphism
$$\rho_{E,N}:G_\bQ\to\mathrm{GL}_2(\bZ/N\bZ)\text{.}$$
Similarly, denoting by $\bQ(E[N])$ the smallest extension of $\bQ$ over which all points of $E[N]$ are rational, the Galois group $\Gal(\bQ(E[N])/\bQ)$ acts on $E[N]$ via $\bZ/N\bZ$-automorphisms; this yields a group homomorphism $\overline{\rho}_{E,N}:\Gal(\bQ(E[N])/\bQ)\to\mathrm{GL}_2(\bZ/N\bZ)$. These two homomorphisms fit in the following commutative diagram:
\begin{center}
\begin{tikzcd}
G_\mathbb{Q} \arrow[r, two heads] \arrow[r] \arrow[rr, "{\rho_{E,N}}"', bend right] & {\mathrm{Gal}(\mathbb{Q}(E[N])/\mathbb{Q})} \arrow[r, "{\overline{\rho}_{E,N}}", hook] & \mathrm{GL}_2(\bZ/N\bZ)
\end{tikzcd}
\end{center}
By taking the inverse limit, the maps $(\rho_{E,N})_{N>0}$ yield a homomorphism
$$\rho_E:G_\bQ\to\varprojlim\mathrm{GL}_2(\bZ/N\bZ)=\mathrm{GL}_2(\widehat{\bZ})\text{.}$$

\section{The local count}
\label{thelocalcount}
The condition $\mathrm{gcd}(p-1,\#E(\mathbb{F}_p))=1$ can be restated as follows: there is no prime $\ell$ satisfying $p\equiv 1\pmod{\ell}$ and $\ell\mid\#E(\mathbb{F}_p)$. Therefore we begin by evaluating, for a fixed prime $\ell$, the density of primes $p$ satisfying $p\equiv1\pmod{\ell}$ and $\ell\mid\#E(\mathbb{F}_p)$.

Recall the Galois representation $\overline{\rho}_{E,\ell}$, which embeds $\Gal(\bQ(E[\ell])/\bQ)$ into $\mathrm{GL}_2(\bbF_\ell)$; we will work now under the assumption that this embedding is actually an isomorphism. The main result in this section is the following.
\begin{lemma}
\label{local}
Let $E/\mathbb{Q}$ be an elliptic curve and let $\ell$ be a prime. Suppose that $\overline{\rho}_{E,\ell}$ (or, equivalently, $\rho_{E,\ell}$) is surjective. Then the density of primes $p$ for which $\ell$ is not a common factor of $p-1$ and $\#E(\mathbb{F}_p)$ is
$$1-\frac{\ell}{(\ell-1)^2(\ell+1)}\text{.}$$
\end{lemma}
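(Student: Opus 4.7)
The plan is to apply the Chebotarev density theorem to the Galois extension $\mathbb{Q}(E[\ell])/\mathbb{Q}$, whose Galois group is identified with $\mathrm{GL}_2(\mathbb{F}_\ell)$ by the surjectivity hypothesis on $\overline{\rho}_{E,\ell}$. The first step is to translate the two arithmetic conditions $p \equiv 1 \pmod{\ell}$ and $\ell \mid \#E(\mathbb{F}_p)$ into statements about the conjugacy class of $\overline{\rho}_{E,\ell}(\mathrm{Frob}_p)$ in $\mathrm{GL}_2(\mathbb{F}_\ell)$, for primes $p$ of good reduction unramified in $\mathbb{Q}(E[\ell])$.

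For the first condition, by the Weil pairing $\det \circ \overline{\rho}_{E,\ell}$ equals the mod-$\ell$ cyclotomic character, so $\det(\overline{\rho}_{E,\ell}(\mathrm{Frob}_p)) \equiv p \pmod{\ell}$, and therefore $p \equiv 1 \pmod{\ell}$ if and only if $\overline{\rho}_{E,\ell}(\mathrm{Frob}_p) \in \mathrm{SL}_2(\mathbb{F}_\ell)$. For the second condition, the characteristic polynomial of $\overline{\rho}_{E,\ell}(\mathrm{Frob}_p)$ is $X^2 - a_p X + p \pmod{\ell}$, and since $\#E(\mathbb{F}_p) = p + 1 - a_p$ is the value of this polynomial at $X=1$, divisibility by $\ell$ is equivalent to $1$ being an eigenvalue of $\overline{\rho}_{E,\ell}(\mathrm{Frob}_p)$. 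Combining, both conditions hold simultaneously exactly when $\overline{\rho}_{E,\ell}(\mathrm{Frob}_p)$ lies in the conjugation-invariant set
$$S = \{M \in \mathrm{SL}_2(\mathbb{F}_\ell) : 1 \text{ is an eigenvalue of } M\}.$$
For $M \in \mathrm{SL}_2(\mathbb{F}_\ell)$, having $1$ as an eigenvalue forces the other eigenvalue to also be $1$, so $S$ coincides with the set of $M \in \mathrm{SL}_2(\mathbb{F}_\ell)$ with $\mathrm{tr}(M) = 2$.

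The remaining step is a counting exercise. Writing $M = \begin{pmatrix} a & b \\ c & d \end{pmatrix}$ with $a+d = 2$ and $ad - bc = 1$, I substitute $d = 2 - a$ to get $bc = -(a-1)^2$. For $a = 1$, the equation $bc = 0$ has $2\ell - 1$ solutions; for each of the $\ell - 1$ choices of $a \neq 1$, the right-hand side is a nonzero square and the equation $bc = -(a-1)^2$ has $\ell - 1$ solutions. Thus $|S| = (2\ell - 1) + (\ell - 1)^2 = \ell^2$. Since $|\mathrm{GL}_2(\mathbb{F}_\ell)| = \ell(\ell-1)^2(\ell+1)$, Chebotarev gives that the density of primes with $\ell \mid \gcd(p-1, \#E(\mathbb{F}_p))$ is $\ell^2 / (\ell(\ell-1)^2(\ell+1)) = \ell / ((\ell-1)^2(\ell+1))$, and subtracting from $1$ yields the claim.

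There is no serious obstacle here: the argument is a direct application of Chebotarev once the conditions are phrased in matrix terms, and the dictionary between the Frobenius and $(a_p, p \bmod \ell)$ is standard. The only mildly delicate points are verifying that the characterization via $\mathrm{tr}(M) = 2$ handles $\ell = 2$ correctly (it does, since the count $|S| = \ell^2$ remains valid) and checking that the finitely many primes ramifying in $\mathbb{Q}(E[\ell])$ or of bad reduction for $E$ do not affect the density.
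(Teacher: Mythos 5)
Your proof is correct and takes essentially the same route as the paper: both translate the two conditions into ``$\det=1$'' and ``$1$ is an eigenvalue'' for $\overline{\rho}_{E,\ell}(\mathrm{Frob}_p)$, count the $\ell^2$ matrices in $\mathrm{GL}_2(\mathbb{F}_\ell)$ satisfying both, and invoke Chebotarev. The only minor variation is in the counting step, where the paper decomposes the set into the two conjugacy classes (the identity and the class of the unipotent Jordan block, of sizes $1$ and $\ell^2-1$), while you parametrize directly via the trace-$2$, determinant-$1$ equations; both yield $\ell^2$.
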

In order to prove this we need two auxiliary propositions.

\begin{prop}
\label{fprop}
Let $E/\mathbb{Q}$ be an elliptic curve and let $\ell$ be a prime. Let $p\neq\ell$ be a prime of good reduction for $E$, let $\mathfrak{P}$ denote a prime above $p$ in $\mathbb{Q}(E[\ell])$ and let $\sigma_\mathfrak{P}$ denote the corresponding Frobenius element. Then $\ell$ divides $\#E(\mathbb{F}_p)$ if and only if $1$ is an eigenvalue of $\overline{\rho}_{E,\ell}(\sigma_\mathfrak{P})$.
\end{prop}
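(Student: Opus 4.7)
The plan is to reduce the statement to a well-known compatibility between arithmetic Frobenius in the Galois representation and the geometric Frobenius on the reduced curve, and then exploit the standard description of $E(\mathbb{F}_p)$ as the fixed points of geometric Frobenius.

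More concretely, I would first observe that since $p$ is a prime of good reduction, the reduction-mod-$\mathfrak{P}$ map induces an isomorphism $E[\ell] \to \widetilde{E}[\ell](\overline{\mathbb{F}}_p)$ for the prime $\ell \neq p$ (the key input being that $\ell$ is coprime to the residue characteristic, so the $\ell$-torsion injects into the reduction; see e.g.\ \cite{silverman}, Proposition VII.3.1). Under this identification, the decomposition group at $\mathfrak{P}$ surjects onto $\Gal(\overline{\mathbb{F}}_p/\mathbb{F}_p)$, and the distinguished generator $\sigma_\mathfrak{P}$ maps to the geometric Frobenius $\phi_p\colon x\mapsto x^p$. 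Consequently, the action of $\overline{\rho}_{E,\ell}(\sigma_\mathfrak{P})$ on $E[\ell]$ corresponds, via reduction, to the action of $\phi_p$ on $\widetilde{E}[\ell](\overline{\mathbb{F}}_p)$.

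Next I would use the classical identity $\widetilde{E}(\mathbb{F}_p) = \ker(\phi_p - 1)$ inside $\widetilde{E}(\overline{\mathbb{F}}_p)$. From this it follows that $\ell \mid \#E(\mathbb{F}_p)$ if and only if $\widetilde{E}(\mathbb{F}_p)$ contains a point of order $\ell$ (by Cauchy's theorem), which in turn is equivalent to $\ker(\phi_p - 1) \cap \widetilde{E}[\ell](\overline{\mathbb{F}}_p) \neq 0$, i.e.\ to $1$ being an eigenvalue of $\phi_p$ acting on $\widetilde{E}[\ell](\overline{\mathbb{F}}_p)$. Transporting this back through the Galois-equivariant isomorphism above yields exactly the claimed equivalence.

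I do not foresee a substantial obstacle here: everything is standard once one sets up the correct compatibility. The only subtlety worth being careful about is the identification of $\sigma_\mathfrak{P}$ with $\phi_p$ under reduction, which depends on the hypothesis of good reduction at $p$ and on $p \neq \ell$ so that the reduction map on $\ell$-torsion is an isomorphism; both hypotheses are explicitly included in the statement.
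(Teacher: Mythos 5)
Your proof is correct and takes essentially the same approach as the paper: both rely on the reduction map being an isomorphism on $E[\ell]$ (valid since $p\neq\ell$ is a prime of good reduction), on the compatibility between $\sigma_\mathfrak{P}$ and the $p$-power Frobenius on $\overline{\mathbb{F}}_p$, and on Cauchy's and Lagrange's theorems to translate between ``$\ell\mid\#E(\mathbb{F}_p)$'' and the existence of a nonzero fixed vector for $\overline{\rho}_{E,\ell}(\sigma_\mathfrak{P})$. The paper merely phrases the argument element-by-element (lifting and reducing individual points) rather than via the Galois-equivariant isomorphism and the identity $\widetilde{E}(\mathbb{F}_p)=\ker(\phi_p-1)$ that you invoke.
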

\begin{proof}
This is implicit in \cite{zywina}, \S2, but we write a full proof for completeness. Assume first that $1$ is an eigenvalue of $\overline{\rho}_{E,\ell}(\sigma_\mathfrak{P})$. Then $\sigma_\mathfrak{P}$ has a fixed point, say $Q$, in $E[\ell]$. Since $\sigma_\mathfrak{P}(Q)=Q$, reducing this modulo $\mathfrak{P}$ yields $\phi_p(Q')=Q'$, where $Q'$ is the reduction of $Q$ modulo $\mathfrak{P}$ and $\phi_p$ is the Frobenius automorphism of $\overline{\mathbb{F}_p}$ given by raising to the $p$-th power. This equality implies that $Q'\in E(\mathbb{F}_p)$. Since $Q\in E[\ell]$ and the reduction map restricted to $E[\ell]$ is injective (\cite{silverman}, Proposition IV.3.1) it follows that $Q'$ has order $\ell$. Therefore $E(\mathbb{F}_p)$ has a point of order $\ell$. Hence by Lagrange's Theorem $\ell\mid\#E(\mathbb{F}_p)$.

Conversely, assume that $\ell\mid\#E(\mathbb{F}_p)$. Then by Cauchy's Theorem $E(\mathbb{F}_p)$ contains a point $Q'$ of order $\ell$. We lift $Q'$ to a point $Q$ in $E[\ell]$. Then the fact that $\phi_p(Q')=Q'$ together with the definition of the Frobenius element $\sigma_\mathfrak{P}$ implies
$$\sigma_\mathfrak{P}(Q)\equiv Q\pmod{\mathfrak{P}}\text{.}$$
But injectivity of reduction modulo $\mathfrak{P}$ on $E[\ell]$ implies that we actually have $\sigma_\mathfrak{P}(Q)=Q$. Therefore $\sigma_\mathfrak{P}$ has a fixed point in $E[\ell]$, which means precisely that $\overline{\rho}_{E,\ell}(\sigma_{\mathfrak{P}})$ has $1$ as an eigenvalue.
\end{proof}

Similarly, we now express the condition that $\ell\mid p-1$ in terms of the image of $\sigma_\mathfrak{P}$ under the Galois representation $\overline{\rho}_{E,\ell}$.

\begin{prop}
\label{fprop2}
Let $E/\mathbb{Q}$ be an elliptic curve and let $\ell$ be a prime. Let $p\neq\ell$ be a prime, let $\mathfrak{P}$ denote a prime above $p$ in $\mathbb{Q}(E[\ell])$ and let $\sigma_\mathfrak{P}$ denote the corresponding Frobenius element. Then $\ell$ divides $p-1$ if and only if $\det(\overline{\rho}_{E,\ell}(\sigma_\mathfrak{P}))=1$ (i.e. if $\overline{\rho}_{E,\ell}(\sigma_\mathfrak{P})\in\mathrm{SL}_2(\mathbb{F}_\ell)$).
\end{prop}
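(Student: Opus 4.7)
The plan is to identify the determinant of $\overline{\rho}_{E,\ell}$ with the mod-$\ell$ cyclotomic character, using the Weil pairing on $E[\ell]$.

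First I would recall the Weil pairing $e_\ell : E[\ell] \times E[\ell] \to \mu_\ell$, which is a non-degenerate, alternating, bilinear pairing that is Galois equivariant: for every $\sigma \in G_\bQ$ and every $P,Q \in E[\ell]$, one has $\sigma(e_\ell(P,Q)) = e_\ell(\sigma(P), \sigma(Q))$. Combining Galois equivariance with bilinearity and non-degeneracy shows that if $\overline{\rho}_{E,\ell}(\sigma)$ is represented by a matrix $M \in \mathrm{GL}_2(\bbF_\ell)$ in some basis of $E[\ell]$, then $\sigma$ acts on the value $e_\ell(P,Q)$ (for a basis $P,Q$) by raising it to the power $\det M$. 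Hence $\det \overline{\rho}_{E,\ell}$ equals the action of $\sigma$ on $\mu_\ell$, i.e., the mod-$\ell$ cyclotomic character $\chi_\ell$.

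Next I would specialise this identity to $\sigma = \sigma_\mathfrak{P}$. By definition, the Frobenius element $\sigma_\mathfrak{P}$ acts on an $\ell$-th root of unity $\zeta \in \mu_\ell$ (which lies in $\bQ(E[\ell])$ because of the Weil-pairing inclusion $\bQ(\mu_\ell) \subseteq \bQ(E[\ell])$) by $\sigma_\mathfrak{P}(\zeta) = \zeta^p$, since $p$ is unramified in $\bQ(\mu_\ell)$ (as $p \neq \ell$) and the Frobenius at $p$ in $\Gal(\bQ(\mu_\ell)/\bQ) \cong (\bZ/\ell\bZ)^*$ is $p \bmod \ell$. Therefore $\chi_\ell(\sigma_\mathfrak{P}) \equiv p \pmod{\ell}$, and by the previous paragraph $\det(\overline{\rho}_{E,\ell}(\sigma_\mathfrak{P})) \equiv p \pmod{\ell}$.

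Finally, the equivalence is immediate: $\det(\overline{\rho}_{E,\ell}(\sigma_\mathfrak{P})) = 1$ in $\bbF_\ell$ if and only if $p \equiv 1 \pmod{\ell}$, that is, if and only if $\ell \mid p-1$. There is no real obstacle here; the only subtlety is the justification that the Weil pairing yields precisely the cyclotomic character as the determinant, but this is a standard consequence of the properties of $e_\ell$ recorded in \cite{silverman}.
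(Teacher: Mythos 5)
Your proof is correct and follows the same route as the paper's: both use the Weil pairing to identify $\det \overline{\rho}_{E,\ell}$ with the mod-$\ell$ cyclotomic character, then conclude by observing that Frobenius at $\mathfrak{P}$ acts on $\zeta_\ell$ as $\zeta_\ell \mapsto \zeta_\ell^p$. The only difference is that you spell out the Weil-pairing computation showing the determinant equals the cyclotomic character, whereas the paper cites this as a standard fact and expresses it via a commutative diagram.
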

\begin{proof}
We first recall that $\mathbb{Q}(\zeta_\ell)\subseteq\mathbb{Q}(E[\ell])$, where $\zeta_\ell$ denotes a primitive $\ell$-th root of unity. This is a consequence of basic properites of the Weil pairing (see \cite{silverman}, section III.8). Therefore $\mathrm{Gal}(\mathbb{Q}(E[\ell])/\mathbb{Q})$ surjects naturally onto $\mathrm{Gal}(\mathbb{Q}(\zeta_\ell)/\mathbb{Q})\cong\mathbb{F}_\ell^\times$. Under $\overline{\rho}_{E,\ell}$, $\Gal(\bQ(E[\ell])/\bQ)$ is identified with a subgroup of $\mathrm{GL}_2(\bbF_\ell)$, so we get a map from a subgroup of $\mathrm{GL}_2(\bbF_\ell)$ to $\bbF_\ell^\times$. It is a standard fact that this map is given by the determinant. In other words, we have a commutative diagram:
\begin{center}
\begin{tikzcd}
{\mathrm{Gal}(\mathbb{Q}(E[\ell])/\mathbb{Q})} \arrow[rrd, two heads] \arrow[rr, "{\overline{\rho}_{E,\ell}}", hook] &  & \mathrm{GL}_2(\mathbb{F}_\ell) \arrow[d, "\det"] \\
                                                                                                                     &  & \mathbb{F}_\ell^\times                          
\end{tikzcd}
\end{center}

This reduces our task to proving that $p\equiv1\pmod{\ell}$ if and only if $\sigma_\fP(\zeta_\ell)=\zeta_\ell$. Since $\sigma_\fP(\zeta_\ell)=\zeta_\ell^p$, this is clear.
\end{proof}
We are now in a position to prove Lemma \ref{local}.
\begin{proof}[Proof of Lemma \ref{local}]
Keeping the notation used in Lemmas \ref{fprop} and \ref{fprop2}, combining both these lemmas we see that we are interested in evaluating the density of primes $p$ for which $\overline{\rho}_{E,\ell}(\sigma_\fP)$ has determinant $1$ and has $1$ as an eigenvalue (this is the density of the set of ``bad'' primes, those for which $\ell$ is a common factor of $p-1$ and $\#E(\bbF_p)$). In other words, we want primes $p$ such that $\overline{\rho}_{E,\ell}(\sigma_\fP)$ is conjugate to a matrix of the form
$$\begin{pmatrix}1&\ast\\0&1\end{pmatrix}\text{.}$$
We count these matrices: apart from the identity matrix, all these matrices fall into the conjugacy class of $T=\begin{pmatrix}1&1\\0&1\end{pmatrix}$. In fact, for $a\in\mathbb{F}_\ell^\star$, a calculation shows that
$$\begin{pmatrix}1&0\\0&a\end{pmatrix}^{-1}\begin{pmatrix}1&1\\0&1\end{pmatrix}\begin{pmatrix}1&0\\0&a\end{pmatrix}=\begin{pmatrix}1&a\\0&1\end{pmatrix}\text{.}$$
We are then interested in computing the cardinalities of the conjucacy classes in $\mathbb{GL}_2(\mathbb{F}_\ell)$ of the identity and $T$. The former has cardinality $1$; we now compute the order of the stabilizer of $T$ under the conjugation action. A matrix $M=\begin{pmatrix}a&c\\b&d\end{pmatrix}\in\mathrm{GL}_2(\bbF_\ell)$ stabilizes $T$ if and only if
$$\begin{pmatrix}a&c\\b&d\end{pmatrix}\begin{pmatrix}1&1\\0&1\end{pmatrix}=\begin{pmatrix}1&1\\0&1\end{pmatrix}\begin{pmatrix}a&c\\b&d\end{pmatrix}\text{,}\quad\text{i.e.,}\quad\begin{pmatrix}a&a+c\\b&b+d\end{pmatrix}=\begin{pmatrix}a+b&c+d\\b&d\end{pmatrix}\text{.}$$
The previous equality is equivalent to $a=d$ and $b=0$. This gives $\ell^2$ possible matrices, but those $\ell$ of them for which $a=d=0$ are not in $\mathrm{GL}_2(\bbF_\ell)$, so we conclude that the stabilizer of $T$ has $\ell^2-\ell$ elements. By the orbit-stabilizer theorem together with the well-known fact that $\#\mathrm{GL}_2(\mathbb{F}_\ell)=(\ell^2-1)(\ell^2-\ell)$, we conclude that the conjugacy class of $T$ has
$$\frac{\#\mathrm{GL}_2(\bbF_\ell)}{\ell^2-\ell}=\ell^2-1$$
elements. Therefore there are $\ell^2$ matrices in $\mathrm{GL}_2(\mathbb{F}_\ell)$ which have determinant $1$ and have $1$ as eigenvalue.

Now we use Chebotarev's density theorem: by assumption $\overline{\rho}_{E,\ell}$ identifies $\Gal(\bQ(E[\ell])/\bQ)$ with $\mathrm{GL}_2(\bbF_\ell)$, and we want to count primes $p$ for which $\overline{\rho}_{E,\ell}(\sigma_\fP)$ falls outside one of two conjugacy classes of $\mathrm{GL}_2(\bbF_\ell)$ under this identification. By the computation above, these conjugacy classes altogether have $\ell^2$ elements, and since $\mathrm{GL}_2(\bbF_\ell)$ has $(\ell^2-1)(\ell^2-\ell)$ elements, the remaining conjugacy classes comprise $(\ell^2-1)(\ell^2-\ell)-\ell^2$ elements of $\mathrm{GL}_2(\bbF_\ell)$. By Chebotarev's density theorem it follows that the desired density is
$$\frac{(\ell^2-1)(\ell^2-\ell)-\ell^2}{(\ell^2-1)(\ell^2-\ell)}=1-\frac{\ell}{(\ell-1)^2(\ell+1)}\text{,}$$
as we wanted.
\end{proof}

\section{The global count}

Recall that, given an elliptic curve $E/\bQ$, we are interested in computing the density of rational primes $p$ satisfying $\mathrm{gcd}(p-1,\#E(\bbF_p))=1$. A prime $p$ satisfies this if and only if, for every prime $\ell$, the conditions $p\equiv1\pmod{\ell}$ and $\ell\mid\#E(\bbF_p)$ do not hold simultaneously. Suppose that the Galois representations $\overline{\rho}_{E,\ell}$ are surjective for all primes $\ell$. A naive probabilistic reasoning, based on Lemma \ref{local}, then suggests that the desired density is
\begin{equation}
\label{naiveconst}
\prod_{\ell\text{ prime}}\left(1-\frac{\ell}{(\ell-1)^2(\ell+1)}\right)\approx0.24238005\text{.}
\end{equation}
This naive reasoning, however, turns out to give the wrong answer, and the reason is that it assumes some sort of independence between the events
$$(p\equiv 1\pmod{\ell}\text{ and }\ell\mid\#E(\bbF_p))$$
as $\ell$ ranges over the primes. This does not necessarily hold, and we need to add some factors to our prediction \eqref{naiveconst} (the so-called entanglement correction factors, following the standard terminology in the literature, as laid out for example in \cite{daniels}) to account for this lack of independence.

To explain this lack of independence, let $\ell_1$ and $\ell_2$ be distinct primes. As we saw before, in Lemmas \ref{fprop} and \ref{fprop2}, whether $\ell_1$ divides $\#E(\bbF_p)$ is determined by the Frobenius over $p$ in $\Gal(\bQ(E[\ell_1])/\bQ)$, and, similarly, whether $\ell_2$ divides $\#E(\bbF_p)$ is determined by the Frobenius over $p$ in $\Gal(\bQ(E[\ell_2])/\bQ)$. Denote these two automorphisms by $\sigma_{p,\ell_1}$ and $\sigma_{p,\ell_2}$, respectively. Then $\sigma_{p,\ell_1}$ and $\sigma_{p,\ell_2}$ are the restrictions to $\bQ(E[\ell_1])$ and $\bQ(E[\ell_2])$, respectively, of an element of $\Gal(\bQ(E[\ell_1\ell_2])/\bQ)$, namely the Frobenius $\sigma_{p,\ell_1\ell_2}$ over $p$ in $\Gal(\bQ(E[\ell_1\ell_2])/\bQ)$.
\begin{center}
    \begin{tikzcd}[column sep={1.5cm,between origins},row sep={1.5cm,between origins}]
                                                                     & {\mathrm{Gal}(\mathbb{Q}(E[\ell_1\ell_2])/\mathbb{Q})}                                        &                                                                                          &                                                             & & {\sigma_{p,\ell_1\ell_2}} \arrow[rd, no head] &                                         \\
{\mathrm{Gal}(\mathbb{Q}(E[\ell_1])/\mathbb{Q})\quad} \arrow[ru, no head] &                                                                                               & {\quad\mathrm{Gal}(\mathbb{Q}(E[\ell_2])/\mathbb{Q})} \arrow[ld, no head] \arrow[lu, no head] & & {\sigma_{p,\ell_1}} \arrow[ru, no head] \arrow[rd, no head] &                                               & {\sigma_{p,\ell_2}} \arrow[ld, no head] \\
                                                                     & {\mathrm{Gal}(\mathbb{Q}(E[\ell_1])\cap\mathbb{Q}(E[\ell_2])/\mathbb{Q})} \arrow[lu, no head] &                                                                                       &   &                                                             & \sigma                                        &                                        
\end{tikzcd}
\end{center}
But this implies that $\sigma_{p,\ell_1}$ and $\sigma_{p,\ell_2}$ restrict to the same element of $\mathrm{Gal}(\mathbb{Q}(E[\ell_1])\cap\mathbb{Q}(E[\ell_2])/\mathbb{Q})$. If the intersection field $\mathbb{Q}(E[\ell_1])\cap\mathbb{Q}(E[\ell_2])$ is strictly larger than $\mathbb{Q}$, then this gives a non-trivial dependence between $\sigma_{p,\ell_1}$ and $\sigma_{p,\ell_2}$, for an arbitrary prime $p$.

\begin{example}
\label{-3n2}
Suppose $E/\mathbb{Q}$ is an elliptic curve whose discriminant $\Delta$ is of the form $-3n^2$, for some integer $n$, and that $\overline{\rho}_{E,2}$ and $\overline{\rho}_{E,3}$ are surjective. Then, according to the naive heuristic that predicted \eqref{naiveconst}, the density of primes $p$ for which $2$ and $3$ do not divide $\#E(\bbF_p)$ should be
\begin{equation}
\label{1348}
\prod_{\ell=2,3}\left(1-\frac{\ell}{(\ell-1)^2(\ell+1)}\right)=\frac{13}{48}\text{.}
\end{equation}
However, this is not correct, because the fields $\bQ(E[2])$ and $\bQ(E[3])$ intersect non-trivially: on the one hand it is well-known that $\bQ(E[2])$ contains $\sqrt{\Delta}$ (and hence contains $\sqrt{-3}$, given our assumption on $\Delta$), and on the other hand, as remarked before, $\bQ(E[3])$ contains $\zeta_3=\frac{1+\sqrt{-3}}{2}$ (and hence contains $\sqrt{-3}$). Therefore $\bQ(E[2])$ and $\bQ(E[3])$ both contain $\bQ(\sqrt{-3})$:
\begin{center}
\begin{tikzcd}
                                       & {\mathbb{Q}(E[6])}                                            &                                        \\
{\mathbb{Q}(E[2])} \arrow[ru, no head] &                                                               & {\mathbb{Q}(E[3])} \arrow[lu, no head] \\
                                       & \mathbb{Q}(\sqrt{-3}) \arrow[ru, no head] \arrow[lu, no head] &                                        \\
                                       & \mathbb{Q} \arrow[u, no head]                                 &                                       
\end{tikzcd}
\end{center}
This puts a restriction on the possible pairs $(\sigma_{p,2},\sigma_{p,3})$, since for any prime $p$ both of these must act on $\sqrt{-3}$ in the same way. The computations that follow imply that (if $\bQ(\sqrt{-3})$ is the full intersection $\bQ(E[2])\cap\bQ(E[3])$) the correct constant in place of \eqref{1348} is $\frac{5}{24}$.
\end{example}

More generally, if $\ell_1,\ldots,\ell_m$ and $\tilde{\ell}_1,\ldots,\tilde{\ell}_n$ are distinct primes, then if
\begin{equation}
\label{genint}
\bQ(E[\ell_1\cdots\ell_m])\cap\bQ(E[\tilde{\ell}_1\cdots\tilde{\ell}_n])
\end{equation}
is strictly larger than $\bQ$, then the $m$-tuple of Frobenius automorphisms $(\sigma_{p,\ell_1},\ldots,\sigma_{p,\ell_m})$ places a non-trivial restriction on the $n$-tuple $(\sigma_{p,\tilde{\ell}_1},\ldots,\sigma_{p,\tilde{\ell}_n})$. In general, the correction factors we need to multiply \eqref{naiveconst} with will thus depend on how intersections of the form \eqref{genint} behave as $\ell_1,\ldots,\ell_m$ and $\tilde{\ell}_1,\ldots,\tilde{\ell}_n$ range over the primes. On what follows we will give a corrected version of \eqref{naiveconst} for so-called Serre curves, the definition of which we give below. 
Recall the homomorphism
$$\rho_E:G_\bQ\to\varprojlim\mathrm{GL}_2(\bZ/N\bZ)=\mathrm{GL}_2(\widehat{\bZ})$$
(see Section \ref{notation}). This homomorphism is \emph{never} surjective, as a theorem of Serre (see \cite{serre}) shows; indeed we show below, following Serre, that there is an index $2$ subgroup of $\mathrm{GL}_2(\widehat{\bZ})$ in which $\rho_E(G_\bQ)$ is contained. The failure of $\rho_E$ to surject is in some cases explained by non-trivial intersections of the form \eqref{genint}, as we see in the proof below. We introduce a convenient piece of notation first.

\begin{defn}
Given $\gamma\in\mathrm{GL}_2(\bbF_2)$, let it act on $\bbF_2^2$ in the natural way; it then permutes the three nonzero elements of $\bbF_2^2$. We define $\sgn(\gamma)$ to be the sign of this permutation.
\end{defn}

\begin{prop}
\label{serreprop}
Let $E/\bQ$ be an elliptic curve, let $\Delta$ be its discriminant, and let $D$ be the discriminant of the $\bQ(\sqrt{\Delta})$. Then,
\begin{enumerate}[label=(\roman*)]
\item If $D\equiv 0\pmod{4}$, then there exists a primitive character $\chi$ modulo $D$ such that
$$\rho_{E,D}(G_\bQ)\subseteq\{M\in\mathrm{GL}_2(\bZ/D\bZ):\chi(\det(M))=\sgn(M\bmod{2})\}\text{,}$$
where the right hand side has index $2$ in $\mathrm{GL}_2(\bZ/D\bZ)$.
\item If $D\equiv 1\pmod{4}$, then there exists a primitive character $\chi$ modulo $D$ such that
$$\rho_{E,2D}(G_\bQ)\subseteq\{M\in\mathrm{GL}_2(\bZ/2D\bZ):\chi(\det(M))=\sgn(M\bmod{2})\}\text{,}$$
where the right hand side has index $2$ in $\mathrm{GL}_2(\bZ/2D\bZ)$.
\end{enumerate}
Therefore, in either case, there is a primitive character $\chi$ modulo $D$ such that
\begin{equation}
\label{serrecond}
\rho_E(G_\bQ)\subseteq G_E
\end{equation}
where
$$G_E=\{M\in\mathrm{GL}_2(\widehat{\bZ}):\chi(\det(M\pmod{D}))=\sgn(M\bmod{2})\}$$
has index $2$ in $\mathrm{GL}_2(\widehat{\bZ})$.
\end{prop}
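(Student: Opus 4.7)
The plan is to produce two quadratic characters of $G_\bQ$, each cutting out the field $\bQ(\sqrt{\Delta})$ inside $\overline{\bQ}$, one from the $2$-torsion of $E$ and one from roots of unity. Forcing the two characters to agree on every $\sigma\in G_\bQ$ is precisely the content of \eqref{serrecond}. Throughout I set $d=|D|$.

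For the $2$-torsion side, I take a Weierstrass model of $E$ in the form $y^2=f(x)$ after completing the square; the three non-identity points of $E[2]$ are then $(e_i,0)$ where $e_1,e_2,e_3$ are the roots of the cubic $f$, so $\bQ(E[2])$ is the splitting field of $f$ and $\bQ(\sqrt{\Delta})=\bQ(\sqrt{\mathrm{disc}(f)})\subseteq\bQ(E[2])$, since $\mathrm{disc}(f)$ and $\Delta$ differ by a nonzero rational square. Under the canonical isomorphism $\mathrm{GL}_2(\bbF_2)\cong S_3$ coming from the action on the three nonzero vectors of $\bbF_2^2\cong E[2]$, the character $\sgn$ from the definition preceding the proposition corresponds to the sign of the induced permutation of $\{e_1,e_2,e_3\}$, which by the classical Galois theory of cubics records exactly the action on $\sqrt{\mathrm{disc}(f)}$. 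Hence $\sgn\circ\rho_{E,2}\colon G_\bQ\to\{\pm1\}$ is the quadratic character with kernel $\Gal(\overline{\bQ}/\bQ(\sqrt{\Delta}))$.

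For the cyclotomic side, the Weil pairing (\cite{silverman}, III.8) gives $\bQ(\zeta_N)\subseteq\bQ(E[N])$ for every $N\geq 1$, together with the identity $\sigma(\zeta_N)=\zeta_N^{\det\rho_{E,N}(\sigma)}$ for all $\sigma\in G_\bQ$. By Kronecker--Weber, $\bQ(\sqrt{\Delta})=\bQ(\sqrt{D})\subseteq\bQ(\zeta_d)$, and the quadratic subfield $\bQ(\sqrt{D})$ is cut out by the primitive Kronecker character $\chi=\left(\tfrac{D}{\cdot}\right)$ of conductor $d$. Thus $\chi\circ\det\circ\rho_{E,d}$ is also the quadratic character of $G_\bQ$ with kernel $\Gal(\overline{\bQ}/\bQ(\sqrt{\Delta}))$, so the two characters must coincide:
\[\chi(\det\rho_{E,d}(\sigma))=\sgn(\rho_{E,2}(\sigma))\qquad\text{for every }\sigma\in G_\bQ.\]
If $D\equiv 0\pmod 4$ then $d=D$ is even, so $\rho_{E,2}$ factors through $\rho_{E,D}$ via reduction mod $2$ and the identity is exactly (i); if $D\equiv 1\pmod 4$ then $d$ is odd, and the CRT isomorphism $\mathrm{GL}_2(\bZ/2D\bZ)\cong\mathrm{GL}_2(\bbF_2)\times\mathrm{GL}_2(\bZ/d\bZ)$ lets one repackage the identity as (ii). The index-$2$ claim reduces to checking that the two homomorphisms $\chi\circ\det$ and $\sgn\circ(\cdot\bmod 2)$ are each surjective and jointly surjective onto $\{\pm1\}^2$, which is routine by exhibiting matrices with prescribed determinant and prescribed reduction mod $2$.

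The main obstacle is the pair of identifications in the middle paragraphs. The $2$-torsion identification is standard but requires the small care of relating $\Delta$ to $\mathrm{disc}(f)$, which is harmless because they differ by a rational square. The cyclotomic identification is where the main work lies: one must invoke Kronecker--Weber together with the explicit Gauss-sum computation showing that the quadratic character cutting out $\bQ(\sqrt{D})$ inside $\bQ(\zeta_d)$ is precisely the Kronecker symbol $\left(\tfrac{D}{\cdot}\right)$ rather than any other primitive quadratic character of conductor dividing $d$.
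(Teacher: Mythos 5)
Your proposal is correct and follows essentially the same route as the paper: both use the Weil pairing to identify $\det\circ\rho_{E,N}$ with the cyclotomic character and the cubic-discriminant/sign-of-permutation identification to extract $\sqrt{\Delta}$ from the $2$-torsion, then compare the two resulting quadratic characters cutting out $\bQ(\sqrt{\Delta})$. The only cosmetic difference is that the paper defines $\chi$ abstractly from the commutative square $\Gal(\bQ(\zeta_D)/\bQ)\twoheadrightarrow\Gal(\bQ(\sqrt{D})/\bQ)$ and deduces primitivity from the minimality of $|D|$, whereas you explicitly name $\chi$ as the Kronecker symbol via a Gauss-sum computation -- more explicit, but not needed for the argument.
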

\begin{proof}
We prove only the case $D\equiv 1\pmod{4}$, because it is the one that matters the most to us and the proof in the case $D\equiv0\pmod{4}$ is completely analogous. We consider the following diagram of field extensions:
\begin{center}
\begin{tikzcd}
                                        & {\mathbb{Q}(E[2D])}                                          &                                          \\
                                        &                                                              & {\mathbb{Q}(E[D])} \arrow[lu, no head]   \\
{\mathbb{Q}(E[2])} \arrow[ruu, no head] &                                                              & \mathbb{Q}(\zeta_{D}) \arrow[u, no head] \\
                                        & \mathbb{Q}(\sqrt{D}) \arrow[ru, no head] \arrow[lu, no head] &                                          \\
                                        & \mathbb{Q} \arrow[u, no head]                                &                                         
\end{tikzcd}
\end{center}
Here we used the standard inclusion $\bQ(\sqrt{D})\subseteq\bQ(\zeta_D)$, that holds for any quadratic field discriminant $D$. Take now any $\sigma\in G_\bQ$, and let $M=\rho_{E,2D}(\sigma)$. We now describe the action of $\sigma$ on $\sqrt{D}$ in terms of $M$:
\begin{itemize}
\item On the one hand, if $y^2=x^3+Ax+B$ is a reduced Weierstrass equation for $E$, then the $2$-torsion points of $E$ are $0$, $(e_1,0)$, $(e_2,0)$ and $(e_3,0)$ where $e_1,e_2,e_3$ are the roots of $x^3+Ax+B$. Since $\sqrt{D}$ is $(e_1-e_2)(e_1-e_3)(e_2-e_3)$ up to a rational factor, $\sigma(\sqrt{D})$ equals $\sqrt{D}$ or $-\sqrt{D}$ according to whether $\sigma$ permutes $e_1,e_2,e_3$ via an even permutation or an odd permutation. The sign of this permutation is what we defined as the sign of $M\bmod{2}$:
$$\sigma(\sqrt{D})=\begin{cases}
\sqrt{D}&\text{ if }\sgn(M\bmod{2})=1\\
-\sqrt{D}&\text{ if }\sgn(M\bmod{2})=-1\text{.}
\end{cases}$$
\item On the other hand, as we remarked in the proof of Proposition \ref{fprop2}, if we identify $\Gal(\bQ(E[D])/\bQ)$ with a subgroup of $\mathrm{GL}_2(\mathbb{Z}/D\mathbb{Z})$ in the natural way then $\sigma$ acts on $\bQ(\zeta_D)$ via the automorphism corresponding to $\det(M)$ (we remarked this specifically for prime $D$, but the proof carries through in general). We now identify $\Gal(\bQ(\sqrt{D})/\bQ)$ with $\{\pm1\}$ and consider the map $\chi$ fitting in the following commutative square:
\begin{center}
\begin{tikzcd}
\mathrm{Gal}(\mathbb{Q}(\zeta_D)/\mathbb{Q}) \arrow[r, two heads] \arrow[d, "\cong"'] & \mathrm{Gal}(\mathbb{Q}(\sqrt{D})/\mathbb{Q}) \arrow[d, "\cong"] \\
(\mathbb{Z}/D\mathbb{Z})^\times \arrow[r, "\chi"', two heads]                         & \{\pm1\}                                                        
\end{tikzcd}
\end{center}
We can then regard $\chi$ as a Dirichlet character modulo $D$, and its primitivity is an easy consequence of the fact that $|D|$ is actually the \emph{smallest} positive integer $m$ such that $\bQ(\sqrt{D})\subseteq\bQ(\zeta_m)$. Now $\sigma$ fixes or moves $\sqrt{D}$ according to whether $\chi(\det(M))$ is $1$ or $-1$ respectively, so that
$$\sigma(\sqrt{D})=\begin{cases}
\sqrt{D}&\text{ if }\chi(\det(M))=1\\
-\sqrt{D}&\text{ if }\chi(\det(M))=-1\text{.}
\end{cases}$$
\end{itemize}
By comparing the two points above it follows immediately that
$$\sgn(M\bmod{2})=\chi(\det(M))\text{,}$$
as desired. It is clear that exactly half of the elements of $\mathrm{GL}_2(\bZ/D\bZ)$ have this property.
\end{proof}

We say that an elliptic curve $E/\bQ$ is a \emph{Serre curve} if equality holds in \eqref{serrecond}. As it turns out, this condition holds quite often; in \cite{jones}, N. Jones proves that almost all elliptic curves over $\bQ$ are Serre curves (for a precise statement see Theorem 4 in \cite{jones}). This makes it reasonable to focus our attention henceforth on Serre curves, which are, according to Proposition \ref{serreprop}, those for which $\rho_E(G_\bQ)$ is as large as possible. Note that for a Serre curve the maps $\overline{\rho}_{E,\ell}$, for prime $\ell$, are automatically surjective.

\begin{thm}
\label{maincount}
Let $E/\bQ$ be a Serre curve, and keep the notation from Proposition \ref{serreprop}. Let $S$ be a finite set of primes, and define
$$\delta_S=\prod_{\ell\in S}\left(1-\frac{\ell}{(\ell-1)^2(\ell+1)}\right)\text{.}$$
Then,
\begin{enumerate}[label=(\roman*)]
\item If $D\equiv0\pmod{4}$, the density of primes $p$ for which the conditions $\ell\mid\#E(\bbF_p)$ and $p\equiv 1\pmod{\ell}$ do not hold simultaneously for any $\ell\in S$ is $\delta_S$.
\item If $D\equiv1\pmod{4}$, the density of primes $p$ for which the conditions $\ell\mid\#E(\bbF_p)$ and $p\equiv1\pmod{\ell}$ do not hold simultaneously for any $\ell\in S$ is
$$\begin{cases}
\delta_S&\text{ if }2D\nmid\prod_{\ell\in S}\ell\\
\left(1+\prod_{\ell\mid D}\frac{-\ell}{\ell^3-\ell^2-2\ell+1}\right)\delta_S&\text{ if }2D\mid\prod_{\ell\in S}\ell\text{.}
\end{cases}$$
\end{enumerate}
\end{thm}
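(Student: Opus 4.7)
The plan is to invoke Chebotarev's density theorem at a suitable finite level and use the Serre curve hypothesis to identify the image of $G_\bQ$ explicitly. Let $N$ be the least common multiple of $\prod_{\ell\in S}\ell$ with $D$ in case (i), or with $2D$ in case (ii); in either case, Proposition \ref{serreprop} identifies $H:=\rho_{E,N}(G_\bQ)$ with the index-$2$ subgroup $\ker\epsilon\subset\mathrm{GL}_2(\bZ/N\bZ)$, where $\epsilon(M):=\chi(\det M\bmod D)\cdot\sgn(M\bmod 2)$ is a homomorphism to $\{\pm 1\}$. For each prime $\ell$, let $A_\ell\subset\mathrm{GL}_2(\bbF_\ell)$ denote the ``good'' set of Lemma \ref{local} (matrices that do not simultaneously have determinant $1$ and $1$ as an eigenvalue), and set $A:=\{M\in\mathrm{GL}_2(\bZ/N\bZ):M\bmod\ell\in A_\ell\text{ for all }\ell\in S\}$. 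By Chebotarev the desired density equals $|A\cap H|/|H|$.

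Writing $\mathbf{1}_H=(1+\epsilon)/2$ and averaging over $\mathrm{GL}_2(\bZ/N\bZ)$, I would express
$$\frac{|A\cap H|}{|H|}=E[\mathbf{1}_A]+E[\epsilon\cdot\mathbf{1}_A]=\delta_S+\mathrm{Corr},$$
where the first term equals $\delta_S$ by Lemma \ref{local} and the CRT decomposition $\mathrm{GL}_2(\bZ/N\bZ)=\prod_\ell\mathrm{GL}_2(\bZ/\ell^{e_\ell}\bZ)$. The main task is then to evaluate the correction $\mathrm{Corr}=E[\epsilon\cdot\mathbf{1}_A]$, which by CRT factors as a product $\prod_{\ell\mid N}E_\ell[f_\ell]$, where $f_\ell$ bundles together the relevant local factors: $\chi_\ell\circ\det$ when $\ell\mid D$, a $\sgn$ factor when $\ell=2$, and $\mathbf{1}_{A_\ell}$ when $\ell\in S$.

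Next I would compute each local expectation. If $\ell\mid D$ is odd and $\ell\notin S$, then $f_\ell=\chi_\ell\circ\det$, and primitivity of $\chi_\ell$ together with surjectivity of $\det$ (with fibres of equal size) forces $E_\ell[f_\ell]=0$. If $\ell\mid D$ is odd and $\ell\in S$, then $f_\ell=\chi_\ell(\det)\mathbf{1}_{A_\ell}$; subtracting the contribution of the ``bad'' set $\mathcal{M}_\ell$ (whose $\ell^2$ elements all have determinant $1$) from the vanishing character sum on $\mathrm{GL}_2(\bbF_\ell)$ gives $E_\ell[f_\ell]=-\ell/((\ell-1)^2(\ell+1))$. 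The computation at $\ell=2$ splits by case: in case (i), $\chi_2$ is a nontrivial primitive character modulo $2^{e_2}$ with $e_2\ge 2$, and the observation that $\det$ surjects from the kernel of $\mathrm{GL}_2(\bZ/2^{e_2}\bZ)\to\mathrm{GL}_2(\bbF_2)$ onto $(\bZ/2^{e_2}\bZ)^\times$ lets the lift-by-lift character sum collapse, forcing $E_2[f_2]=0$ regardless of the other factors in $f_2$; in case (ii), the factor at $2$ is $E[\sgn]=0$ when $2\notin S$, and equals $E[\sgn\cdot\mathbf{1}_{A_2}]=1/3$ when $2\in S$ (since the two good elements of $\mathrm{GL}_2(\bbF_2)\cong S_3$ are the $3$-cycles, both of sign $+1$).

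Assembling these, case (i) always yields $\mathrm{Corr}=0$, proving (i). In case (ii), $\mathrm{Corr}$ vanishes unless $2\in S$ and every prime dividing $D$ lies in $S$, i.e.\ unless $2D\mid\prod_{\ell\in S}\ell$. In the remaining case the $\ell=2$ factor of $\mathrm{Corr}$ coincides with the $\ell=2$ factor of $\delta_S$, the factors at $\ell\in S$ with $\ell\nmid 2D$ match tautologically, and the ratio of the factor at each odd $\ell\mid D$ is $\frac{-\ell/((\ell-1)^2(\ell+1))}{1-\ell/((\ell-1)^2(\ell+1))}=\frac{-\ell}{\ell^3-\ell^2-2\ell+1}$, producing the claimed formula. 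The main technical hurdle is the case $\ell=2$ under (i): verifying that the index-$2$ Serre constraint has no net effect requires the lifting argument above, which hinges on $\chi_2$ being nontrivial (from primitivity) and on $\det$ restricted to the congruence kernel surjecting onto the units of $\bZ/2^{e_2}\bZ$.
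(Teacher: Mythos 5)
Your proof is correct, and it takes a genuinely different route from the paper's. The paper splits into an ``untangled'' regime (case (i), and case (ii) with $2D\nmid\prod_{\ell\in S}\ell$), where it proves by an explicit lifting argument that $\mathrm{Gal}(\bQ(E[\Pi])/\bQ)\to\prod_{\ell\in S}\mathrm{GL}_2(\bbF_\ell)$ is onto and then applies Chebotarev at level $\Pi$ to get $\delta_S$ directly, and an ``entangled'' regime (case (ii) with $2D\mid\prod_{\ell\in S}\ell$) where it counts in the index-$2$ subgroup ${\prod}^\ast\mathrm{GL}_2(\bbF_\ell)$ and evaluates a character sum to extract the ratio to $\delta_S$. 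You instead work uniformly at the larger level $N$, expand $\mathbf{1}_H=\tfrac12(1+\epsilon)$, and write the density as $\delta_S+E[\epsilon\cdot\mathbf{1}_A]$; the correction term factorizes over primes by CRT, and the various cases of the theorem correspond to which local factors vanish. This unifies all cases in one computation and replaces the explicit surjectivity-by-lifting argument with the vanishing of a local character average. The technical core is the same in both: in your argument the surjectivity of $\det$ on the congruence kernel $\ker(\mathrm{GL}_2(\bZ/2^{e_2}\bZ)\to\mathrm{GL}_2(\bbF_2))$ makes $E_2[\chi_2(\det)\cdot(\cdots)]=0$, while the paper uses the weaker fact that some unit $\omega$ with $\chi_{2^k}(\omega)=-1$ can be absorbed into a column of the lift; and your odd-$\ell$ computation $\sum_{M\in\mathrm{GL}_2(\bbF_\ell)}\chi_\ell(\det M)\mathbf{1}_{A_\ell}(M)=-\ell^2$ is exactly the paper's $\sum\chi_\ell(\det M_\ell)=-\ell^2$. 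Your approach is arguably cleaner in that it avoids the auxiliary enlargement to $S\cup T$ and the image-is-full case distinction; the paper's is more elementary in that it never needs the full congruence-kernel surjectivity, only the existence of a single $\omega$.
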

\begin{proof}
In what follows we denote by $T$ the set of prime factors of $2D$ and we denote by $\Pi$ the product $\prod_{\ell\in S}\ell$. If the natural composite injection
\begin{center}
\begin{tikzcd}
{\mathrm{Gal}(\mathbb{Q}(E[\Pi])/\mathbb{Q})} \arrow[r, hook] & {\prod_{\ell\in S}\mathrm{Gal}(\mathbb{Q}(E[\ell])/\mathbb{Q})} \arrow[r, "\cong"] & \prod_{\ell\in S}\mathrm{GL}_2(\mathbb{F}_\ell)
\end{tikzcd}
\end{center}
is an isomorphism, then Lemma \ref{local} extends multiplicatively: by Propositions \ref{fprop} and \ref{fprop2} and Chebotarev's density theorem, we are reduced to counting tuples consisting of one element of $\mathrm{GL}_2(\bbF_\ell)$ for each $\ell\in S$, such that the given element of $\mathrm{GL}_2(\bbF_\ell)$ is not conjugate to a matrix of the form
$$\begin{pmatrix}1&\ast\\0&1\end{pmatrix}\text{.}$$
Thus the required density is the product of local densities as computed in the proof of Lemma \ref{local}, i.e. $\delta_S$. We will then show that the map above is an isomorphism when $D\equiv0\pmod{4}$ and also when $D\equiv1\pmod{4}$ and not all prime factors of $2D$ are in $S$, establishing part (i) and the first case of part (ii).

Suppose $D\equiv0\pmod{4}$; let $2^k$ be the highest power of $2$ that divides $D$. Then $k\geq 2$ (in fact $k\in\{2,3\}$). Then the Dirichlet character $\chi$ from Proposition \ref{serreprop} factors as
$$\chi_{2^k}\prod_{\substack{\ell\mid D\\\ell\neq2}}\chi_\ell$$
where $\chi_{2^k}$ is a real primitive Dirichlet character modulo $2^k$ and $\chi_\ell$ is a real primitive Dirichlet character modulo $\ell$ for each odd prime $\ell$ dividing $D$. Take a tuple
$$(M_\ell)_{\ell\in S}\in\prod_{\ell\in S}\mathrm{GL}_2(\bbF_\ell)\text{;}$$
we want to show that this tuple lies in the image of $\rho_{E,\Pi}$, where we identified $\mathrm{GL}_2(\Pi)$ with $\prod_{\ell\in S}\mathrm{GL}_2(\bbF_\ell)$. Extend this tuple to a tuple $M=(M_\ell)_{\ell\in S\cup T}\in\prod_{\ell\in S\cup T}\mathrm{GL}_2(\bbF_\ell)$ in an arbitrary way, and denote by $\Pi'$ the product of the primes in $S\cup T$. In light of the commutative diagram
\begin{center}
\begin{tikzcd}
{\mathrm{Gal}(\mathbb{Q}(E[\Pi'])/\mathbb{Q})} \arrow[r, "{\rho_{E,\Pi'}}"] \arrow[d, two heads] & \mathrm{GL}_2(\mathbb{Z}/\Pi'\mathbb{Z}) \arrow[r, "\cong"] & \prod_{\ell\in S\cup T}\mathrm{GL}_2(\mathbb{F}_\ell) \arrow[d, two heads] \\
{\mathrm{Gal}(\mathbb{Q}(E[\Pi])/\mathbb{Q})} \arrow[r, "{\rho_{E,\Pi}}"']                       & \mathrm{GL}_2(\mathbb{Z}/\Pi\mathbb{Z}) \arrow[r, "\cong"'] & \prod_{\ell\in S}\mathrm{GL}_2(\mathbb{F}_\ell)                           
\end{tikzcd}
\end{center}
where the right vertical map is the obvious restriction, it suffices to show that this tuple lies in the image of $\rho_{E,\Pi'}$. We now use the assumption that $E$ is a Serre curve, implying that $\rho_E(G_\bQ)=G_E$ in the notation of Proposition \ref{serreprop}, and therefore that it suffices to show that there exists a lift $M_{2^k}$ of $M_2$ in $\mathrm{GL}_2(\bZ/2^k\bZ)$ such that
\begin{equation}
\label{m4}\chi_{2^k}(\det(M_{2^k}))\prod_{\ell\in T\setminus\{2\}}\chi_\ell(\det(M_\ell))=\sgn(M_2)\text{.}
\end{equation}
Take any lift
$$M_{2^k}=\begin{pmatrix}a&b\\c&d\end{pmatrix}\in\mathrm{GL}_2(\bZ/2^k\bZ)\text{.}$$
If \eqref{m4} holds we are done. Otherwise take $\omega\in(\mathbb{Z}/2^k\mathbb{Z})^\times$ such that $\chi_{2^k}(\omega)=-1$, and take
$$M'_{2^k}=\begin{pmatrix}a&\omega b\\c&\omega d\end{pmatrix}\text{.}$$
Then $M'_{2^k}$ is another lift of $M_2$ but $\det(M'_{2^k})=\omega\det(M_{2^k})$, so $\chi_{2^k}(\det(M'_{2^k}))=-\chi_{2^k}(\det(M_{2^k}))$, and \eqref{m4} holds with $M_{2^k}$ replaced by $M'_{2^k}$.

Suppose now $D\equiv1\pmod{4}$ and not all prime factors of $2D$ are in $S$. This time the character $\chi$ factors as
$$\chi=\prod_{\ell\mid D}\chi_\ell$$
where each $\chi_\ell$ is a real primitive character modulo $\ell$ for each $\ell\mid D$. Once more, let
$$(M_\ell)_{\ell\in S}\in\prod_{\ell\in S}\mathrm{GL}_2(\bbF_\ell)\text{;}$$
be a tuple we want to show is in the image of $\rho_{E,\Pi}$, where again we identified $\mathrm{GL}_2(\Pi)$ with $\prod_{\ell\in S}\mathrm{GL}_2(\bbF_\ell)$ via the Chinese Remainder Theorem. Let now $q$ be a prime factor of $2D$ which does not lie in $S$, which exists by hypothesis. It suffices to show that we can extend our tuple to a tuple $(M_\ell)_{\ell\in S\cup T}$ with
\begin{equation}
\label{wt}
\prod_{\ell\in T\setminus\{2\}}\chi_\ell(\det(M_\ell))=\sgn(M_2)\text{,}
\end{equation}
for then this tuple will lie in the image of $\rho_{E,\Pi'}$, where $\Pi'=\prod_{\ell\in S\cup T}\ell$, and so will the original one. But this is clear: since by hypothesis there is a prime $q\in T\setminus S$, we can choose $M_q$ freely so that \eqref{wt} holds, once all the other $M_\ell$ have been prescribed.

We now come to the main part of the proof, namely the case where $D\equiv1\pmod{4}$ and $S$ contains all prime factors of $2D$. In this case we can identify $\rho_{E,\Pi}$ with
$${\prod_{\ell\in S}}^\ast\mathrm{GL}_2(\bbF_\ell)=\left\{(M_\ell)_{\ell\in S}:\prod_{\ell\mid D}\chi_\ell(\det(M_\ell))=\sgn(M_2)\right\}$$
and by Propositions \ref{fprop} and \ref{fprop2} and Chebotarev's density theorem, the desired density is
$$\frac{\#\left\{(M_\ell)_{\ell\in S}\in\prod_{\ell\in S}^\ast\mathrm{GL}_2(\bbF_\ell):M_\ell\nsim\begin{pmatrix}1&\ast\\0&1\end{pmatrix}\right\}}{\#\prod_{\ell\in S}^\ast\mathrm{GL}_2(\bbF_\ell)}\text{.}$$
We wish to compare this against
$$\delta_S=\frac{\#\left\{(M_\ell)_{\ell\in S}\in\prod_{\ell\in S}\mathrm{GL}_2(\bbF_\ell):M_\ell\nsim\begin{pmatrix}1&\ast\\0&1\end{pmatrix}\right\}}{\#\prod_{\ell\in S}\mathrm{GL}_2(\bbF_\ell)}\text{.}$$
Since $\prod_{\ell\in S}^\ast\mathrm{GL}_2(\bbF_\ell)$ has index $2$ in $\prod_{\ell\in S}\mathrm{GL}_2(\bbF_\ell)$, the ratio between the desired density and $\delta_S$ is
$$\frac{2\#\left\{(M_\ell)_{\ell\in S}\in\prod_{\ell\in S}^\ast\mathrm{GL}_2(\bbF_\ell):M_\ell\nsim\begin{pmatrix}1&\ast\\0&1\end{pmatrix}\right\}}{\#\left\{(M_\ell)_{\ell\in S}\in\prod_{\ell\in S}\mathrm{GL}_2(\bbF_\ell):M_\ell\nsim\begin{pmatrix}1&\ast\\0&1\end{pmatrix}\right\}}$$
and it is clear that each prime $\ell\nmid 2D$ contributes with a factor $\#\mathrm{GL}_2(\bbF_\ell)$ to both the numerator and the denominator above, so these primes can be ignored, and we are left to compute
\begin{equation}
\label{corr}\frac{2\#\left\{(M_\ell)_{\ell\mid 2D}\in\prod_{\ell\in S}^\ast\mathrm{GL}_2(\bbF_\ell):M_\ell\nsim\begin{pmatrix}1&\ast\\0&1\end{pmatrix}\right\}}{\#\left\{(M_\ell)_{\ell\mid 2D}\in\prod_{\ell\in S}\mathrm{GL}_2(\bbF_\ell):M_\ell\nsim\begin{pmatrix}1&\ast\\0&1\end{pmatrix}\right\}}\text{.}
\end{equation}
According to the counting employed in the proof of Lemma \ref{local}, the denominator of \eqref{corr} is $\prod_{\ell\mid 2D}(\#\mathrm{GL}_2(\bbF_\ell)-\ell^2)$. We now observe that we can rewrite the numerator as
$$\sum\left(1+\sgn(M_2)\prod_{\ell\mid D}\chi_\ell(\det(M_\ell))\right)$$
where the sum ranges over tuples $(M_\ell)_{\ell\mid2D}$ such that each $M_\ell$ is not conjugate to a matrix of the form $\begin{pmatrix}1&\ast\\0&1\end{pmatrix}$ for each $\ell$. Indeed, the summand on display equals $2$ if the corresponding tuple is in $\prod_{\ell\in S}^\ast\mathrm{GL}_2(\bbF_\ell)$ and $0$ otherwise. This equals
\begin{align*}
&\sum1+\sum\sgn(M_2)\prod_{\ell\mid D}\chi_\ell(\det(M_\ell))\\
&=\prod_{\ell\mid 2D}(\#\mathrm{GL}_2(\bbF_\ell)-\ell^2)+\left(\sum\sgn(M_2)\right)\prod_{\ell\mid D}\left(\sum\chi_\ell(\det(M_\ell))\right)\text{,}
\end{align*}
where the condition on the summands is the same as the one specified before.

One computes directly that $\sum\sgn(M_2)=2$. Moreover, if $\ell\mid D$ then \emph{all} matrices in $\mathrm{GL}_2(\bbF_\ell)$ that are conjugate to a matrix of the form $\begin{pmatrix}1&\ast\\0&1\end{pmatrix}$ have determinant $1$ and as such they lie on the kernel of $\chi_\ell\circ\det$. Therefore, among the matrices considered in our summation, there are $\frac{1}{2}\#\mathrm{GL}_2(\bbF_\ell)-\ell^2$ matrices on which $\chi_\ell\circ\det$ evaluates to $1$ and $\frac{1}{2}\#\mathrm{GL}_2(\bbF_\ell)$ on which it evaluates to $-1$. Hence,
$$\sum\chi_\ell(\det(M_\ell))=(\frac{1}{2}\#\mathrm{GL}_2(\bbF_\ell)-\ell^2)-\frac{1}{2}\#\mathrm{GL}_2(\bbF_\ell)=-\ell^2\text{.}$$
We conclude that \eqref{corr} equals
\begin{align*}
\frac{\prod_{\ell\mid 2D}(\#\mathrm{GL}_2(\bbF_\ell)-\ell^2)+2\prod_{\ell\mid D}(-\ell^2)}{\prod_{\ell\mid 2D}(\#\mathrm{GL}_2(\bbF_\ell)-\ell^2)}\\
&=1+\frac{2\prod_{\ell\mid D}(-\ell^2)}{\prod_{\ell\mid 2D}(\#\mathrm{GL}_2(\bbF_\ell)-\ell^2)}\text{.}
\end{align*}
Using that $\#\mathrm{GL}_2(\bbF_\ell)=(\ell^2-1)(\ell^2-\ell)$, this simplifies to
$$1+\frac{\prod_{\ell\mid D}(-\ell^2)}{\prod_{\ell\mid D}(\ell^4-\ell^3-2\ell^2+\ell)}=1+\prod_{\ell\mid D}\frac{-\ell}{\ell^3-\ell^2-2\ell+1}\text{,}$$
as desired.
\end{proof}

\section{Conclusion}
The density we would ultimately like to compute requires a version of Theorem \ref{maincount} where $S$ is allowed to be the set of all primes. We record, however, the natural conjecture that follows from Theorem \ref{maincount} and that is the expected answer to our main problem (for Serre curves).

\begin{conj}
\label{conj}
Define
$$C=\prod_{\ell\text{ prime}}\left(1-\frac{\ell}{(\ell-1)^2(\ell+1)}\right)\text{.}$$
Let $E/\bQ$ be a Serre curve, and keep the notation from Proposition \ref{serreprop}. Then the density of primes $p$ for which $\mathrm{gcd}(\#E(\bbF_p),p-1)=1$ is
$$\begin{cases}
C&\text{ if }D\equiv0\pmod{4}\\
\left(1+\prod_{\ell\mid D}\frac{-\ell}{\ell^3-\ell^2-2\ell+1}\right)C&\text{ if }D\equiv1\pmod{4}\text{.}
\end{cases}$$
\end{conj}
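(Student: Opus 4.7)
The plan is to deduce Conjecture \ref{conj} from Theorem \ref{maincount} by passing to the limit in the set $S$ of primes. Let $S_M = \{\ell \text{ prime}: \ell \leq M\}$, and let $A_M$ denote the set of primes $p$ for which the conditions $\ell \mid \#E(\bbF_p)$ and $p \equiv 1 \pmod{\ell}$ never hold simultaneously for any $\ell \in S_M$. By Theorem \ref{maincount}, each $A_M$ has a natural density $d_M$, equal either to $\delta_{S_M}$ or to its corrected version (note that once $M$ is large enough that $S_M$ contains every prime factor of $2D$, the correction factor in part (ii) stabilises and only $\delta_{S_M}$ continues to move as $M$ grows). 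A direct computation shows that $d_M$ converges to the constant appearing in the conjecture. Since the target set $A = \{p : \mathrm{gcd}(\#E(\bbF_p), p-1) = 1\}$ equals $\bigcap_{M \geq 1} A_M$, proving the conjecture reduces to showing
$$\overline{\mathrm{density}}(A_M \setminus A) \to 0 \qquad \text{as } M \to \infty.$$

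The set $A_M \setminus A$ consists of primes $p$ for which there exists some prime $\ell > M$ with $\ell \mid p-1$ and $\ell \mid \#E(\bbF_p)$; combining these divisibilities with $\#E(\bbF_p) = p + 1 - a_p$ yields $\ell \mid a_p - 2$. When $\ell > 2\sqrt{p} + 2$, the Hasse bound forces $a_p = 2$, and Sato--Tate (as already invoked in the introduction) implies that the set of primes with $a_p = 2$ has density zero. The remaining contribution comes from primes with a common factor $\ell$ in the range $M < \ell \leq 2\sqrt{p} + 2$, and bounding this is the crux of the argument.

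For this range the natural tool is an effective form of the Chebotarev density theorem applied to $\bQ(E[\ell])/\bQ$. Under the Generalised Riemann Hypothesis for the Dedekind zeta functions of these fields, one obtains
$$\#\{p \leq X : \ell \mid p-1,\ \ell \mid \#E(\bbF_p)\} = \frac{\ell}{(\ell-1)^2(\ell+1)}\,\pi(X) + O\bigl(\ell^2 \sqrt{X}\, \log(\ell X)\bigr),$$
and summing over $M < \ell \leq X^{1/6}$ the main terms contribute $O(\pi(X)/M)$ while the error terms remain negligible. The main obstacle lies in the intermediate range $X^{1/6} < \ell \leq 2\sqrt{X}$, where the Chebotarev error is too crude: one must combine Brun--Titchmarsh bounds for $\#\{p \leq X : p \equiv 1 \pmod{\ell}\}$ with the additional constraint $\ell \mid a_p - 2$, exploiting some form of equidistribution of $a_p$ modulo $\ell$. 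Rigorously controlling this range appears to require Lang--Trotter-type average bounds that are not currently available, which is why Conjecture \ref{conj} is offered here as a conjecture rather than a theorem, even under standard analytic hypotheses.
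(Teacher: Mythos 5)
The statement you are asked to prove is a \emph{conjecture} in the paper, and the paper offers no proof; the Conclusion section says explicitly that ``it would be tempting to try to establish Conjecture~\ref{conj} by some sort of limiting procedure, using Lemma~\ref{maincount} as an input. However, this does not seem feasible.'' You do not claim to close the gap, and your final paragraph correctly concedes that the statement must remain a conjecture. So there is no conflict with the paper; what you have written is an elaboration of the paper's one-sentence remark into a concrete roadmap, which is genuinely useful content.

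Your setup is sound: $A=\bigcap_M A_M$, the correction factor in part~(ii) of Theorem~\ref{maincount} stabilises once $S_M$ contains all primes dividing $2D$, and $d_M$ converges to the conjectured constant, so the whole problem is indeed to show $\overline{\mathrm{density}}(A_M\setminus A)\to 0$. The reduction to $\ell\mid a_p-2$ and the Hasse/Sato--Tate treatment of the range $\ell>2\sqrt p+2$ are both correct and mirror the argument the paper uses for $a_p=1$ in the Introduction. One quantitative caveat worth flagging: the GRH-conditional Chebotarev error term for $\bQ(E[\ell])/\bQ$ involves the degree and discriminant of that field, which grow like $\ell^4$ and $\ell^{4+o(1)}$ respectively, so the error is closer to $O\bigl(\ell^{4}\sqrt X\log(\ell X)\bigr)$ than the $O(\ell^2\sqrt X\log(\ell X))$ you wrote. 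This does not change your conclusion; it only shrinks the range of $\ell$ that GRH can control (roughly $\ell\ll X^{1/10}$ rather than $X^{1/6}$), and makes the problematic intermediate range even wider, reinforcing your point that Lang--Trotter-type average estimates --- currently unavailable --- would be needed. In short, your assessment matches the paper's: this is, and should be, stated as a conjecture.
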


It is clear from Theorem \ref{maincount} that the value conjectured above is an upper bound for the desired density, provided that it exists at all. It would be tempting to try to establish Conjecture \ref{conj} by some sort of limiting procedure, using Lemma \ref{maincount} as an input. However, this does not seem feasible, and establishing Conjecture \ref{conj} likely requires bringing more sophisticated methods into play.

\section{Acknowledgements}
The author was funded through the Engineering and Physical Sciences Research Council Doctoral Training Partnership at the University of Warwick. The author would also like to thank Professor Samir Siksek for suggesting this problem, as well as for countless support during the elaboration of this manuscript.

\end{document}